\documentclass[11pt,a4paper]{article}
\usepackage{helvet}         
\usepackage{courier}        
\usepackage{makeidx}         
\usepackage{graphicx}        
\usepackage{multicol}        
\usepackage[bottom]{footmisc}
\usepackage{amsmath,amssymb,amsfonts,amsthm}     

\setlength{\bigskipamount}{5ex plus1.5ex minus 2ex}
\setlength{\textheight}{24.2cm} 
\setlength{\textwidth}{15cm}
\setlength{\hoffset}{-1.2cm}
\setlength{\voffset}{-2.3cm}

\usepackage{amssymb,amsmath, wasysym, graphicx, epsfig, setspace, wrapfig, comment}
\usepackage{latexsym,dsfont,amsfonts,amsmath,amssymb}
\usepackage{bm}
\usepackage{epsfig,psfrag,color}
\usepackage{mathrsfs}
\usepackage{soul}
\usepackage{mathrsfs}   
\usepackage{tikz}
\usepackage[breaklinks=true]{hyperref}

\usepackage{mathtools}

\newcommand{\nrm}[2]{\ensuremath{\|#1\|_{#2}}}

\newcommand{\dist}{\mathrm{dist}}

\newcommand{\N}[0]{\mathbb{N}}

\newcommand{\R}[0]{\mathbb{R}}

\newlength\figureheight
\newlength\figurewidth



\makeatletter
\newcommand{\BIG}{\bBigg@{3}}
\newcommand{\vast}{\bBigg@{4}}
\newcommand{\Vast}{\bBigg@{5}}
\makeatother


\usepackage[normalem]{ulem}

\newtheorem{theorem}{Theorem}
\newtheorem{assumption}{Assumption A$\!\!$}

\newtheorem{Lemma}[theorem]{Lemma}
\newtheorem{Proposition}[theorem]{Proposition}



\newcommand{\sumj}{\sum_{j=1}^\infty}

\newcommand{\bsalpha}{{\boldsymbol{\alpha}}}

\newcommand{\bsx}{{\boldsymbol{x}}}

\newcommand{\bsy}{{\boldsymbol{y}}}
\newcommand{\wbsy}{\widetilde{\boldsymbol{y}}}
\newcommand{\wa}{\widetilde{a}}

\newcommand{\wlam}{\widetilde{\lambda}}

\newcommand{\bsw}{{\boldsymbol{w}}}

\newcommand{\rd}{\,\mathrm{d}}


\newcommand{\calA}{\altmathcal{A}}

\newcommand{\calN}{\altmathcal{N}}

\newcommand{\calV}{\mathcal{V}}




\def\R{\mathbb{R}}

\newcommand{\mask}[1]{{}}

\newcommand{\eps}{\varepsilon}

\definecolor{darkred}{RGB}{139,0,0}
\definecolor{darkgreen}{RGB}{0,100,0}
\definecolor{darkmagenta}{RGB}{170,0,120}
\definecolor{darkpurple}{RGB}{110,0,180}
\definecolor{darkblue}{RGB}{40,0,200}
\definecolor{darkbrown}{rgb}{0.75,0.40,0.15}

\newcommand{\be}{\begin{equation}}
\newcommand{\ee}{\end{equation}}
\newcommand{\bea}{\begin{eqnarray}}
\newcommand{\eea}{\end{eqnarray}}
\newcommand{\beas}{\begin{eqnarray*}}
\newcommand{\eeas}{\end{eqnarray*}}


\DeclareMathAlphabet{\altmathcal}{OMS}{cmsy}{m}{n}



\def\r2p{{\sqrt{2\pi}}}



\newcommand*{\compbed}{\subset\subset}

\newcommand*{\CLip}{\ensuremath{C_\mathrm{Lip}}}

\newcommand*{\wCLip}{\ensuremath{\widetilde{C}_\mathrm{Lip}}}
\newcommand*{\evalueLap}{\ensuremath{\chi_1}}
\newcommand*{\amin}{\ensuremath{a_{\min}}}
\newcommand*{\amax}{\ensuremath{a_{\max}}}




\graphicspath{{.}{./Figures/}{./Numerics/Figures/}}

\title{Bounding the spectral gap for an elliptic eigenvalue problem with uniformly bounded stochastic coefficients}
\date{\today}

\makeatletter
\let\@fnsymbol\@arabic
\makeatother

\author{A. D. Gilbert\footnotemark[1]\and
             I. G. Graham\footnotemark[2] \and
             R. Scheichl\footnotemark[1] $^{,2}$ \and
             I. H. Sloan\footnotemark[3]
             }
                            
\begin{document}
\sloppy
\maketitle

\footnotetext[1]{Institute for Applied Mathematics and Interdisciplinary Center for Scientific Computing,
                           Universit\"{a}t Heidelberg, 69120 Heidelberg, Germany\\
                           \indent\indent\texttt{a.gilbert@uni-heidelberg.de},
                           \texttt{r.scheichl@uni-heidelberg.de}}
\footnotetext[2]{Department of Mathematical Sciences, University of Bath, Bath BA2 7AY UK\\
                          \indent\indent\texttt{i.g.graham@bath.ac.uk}
                          }
\footnotetext[3]{School of Mathematics and Statistics,
                          University of New South Wales, Sydney NSW 2052, Australia\\
                         \indent\indent\texttt{i.sloan@unsw.edu.au}}
                          
\numberwithin{equation}{section}

\abstract{
A key quantity that occurs in the error analysis of several numerical methods for eigenvalue problems
is the distance between the eigenvalue of interest and the next nearest eigenvalue. 
When we are interested in the smallest or fundamental eigenvalue, we call this the \emph{spectral} or \emph{fundamental gap}. 
In a recent manuscript [Gilbert et al., \url{https://arxiv.org/abs/1808.02639}],
the current authors, together with Frances Kuo, studied an elliptic eigenvalue problem
with homogeneous Dirichlet boundary conditions, and
with coefficients that depend on an infinite number of uniformly distributed stochastic parameters.
In this setting, the eigenvalues, and in turn the eigenvalue gap, 
also depend on the stochastic parameters.
Hence, for a robust error analysis 
one needs to be able to bound the gap over all possible realisations of the parameters,
and because the gap depends on infinitely-many random parameters, this is not trivial.
This short note presents, in a simplified setting, an important result that was shown
in the paper above. Namely, that, under certain decay assumptions on the coefficient,  
the spectral gap of such a random elliptic eigenvalue problem can be bounded away from 0, 
uniformly over the entire infinite-dimensional parameter space.
}

\section{Introduction}
Eigenvalue problems are useful for modelling many phenomena from applications in engineering
and physics, e.g, structural mechanics, acoustic scattering, elastic membranes, 
criticality of neutron transport/diffusion and band gap calculations for photonic crystal fibres. 

In this work, we consider the following eigenvalue problem
\begin{align}
\label{eq:evp}
-\nabla\cdot \big(a(\bsx, \bsy) \nabla u(\bsx, \bsy)\big) \,&=\, \lambda(\bsy) u(\bsx, \bsy)\,,
\quad 
&\text{for } \bsx \in D,
\\\nonumber
u(\bsx, \bsy) \,&=\, 0\,, \quad 
&\text{for } \bsx \in \partial D\,,
\end{align}
where the derivatives are taken with respect to $\bsx$, 
and $\bsy = (y_1, y_2, \ldots)$ is a random, infinite-dimensional vector
with independent uniformly distributed components $y_j \sim \mathrm{U}([-\tfrac{1}{2}, \tfrac{1}{2}])$.
We assume that the \emph{physical} domain
$D \subset \R^d$, for $d = 1, 2, 3$, is bounded with Lipschitz boundary, 
and denote the \emph{stochastic/parameter} domain  
by $U \coloneqq [-\tfrac{1}{2}, \tfrac{1}{2}]^\N$.

In many uncertainty quantification (UQ) applications, the 
coefficient $a(\bsx, \bsy)$ is given by a Karhunen--Lo\`eve expansion of a random field.
Taking this as motivation, we assume that the coefficient is an affine map of 
$\bsy$, and satisfies
\[
0 \,<\, a(\bsx, \bsy) \,=\, a_0(\bsx) + \sum_{j = 1}^\infty y_j a_j(\bsx) \, <\, \infty \,,
\quad \text{for all } x \in D,\ \bsy \in U.
\]
Further assumptions on the coefficient will be given
explicitly in Assumption~A\ref{asm:coeff} below.

If we ignore the $\bsy$ dependence, then \eqref{eq:evp}
is a self-adjoint eigenvalue problem, which has been
studied extensively in the literature, see, e.g., \cite{DFII,Hen06}. 
In particular, it is well known that 
\eqref{eq:evp} has countably many eigenvalues
\[
0 \,<\, \lambda_1 \,<\, \lambda_2 \,\leq\, \lambda_3 \,\leq\, \cdots\,,
\]
and that the smallest eigenvalue is simple. 
However, in our setting the eigenvalues depend on $\bsy$: $\lambda_k = \lambda_k(\bsy)$, 
and since the parameter domain is infinite-dimensional, care
must be taken when transferring classical results to our setting. 
In particular, although it is well known that in the unparametrised setting the
spectral gap, $\lambda_2 - \lambda_1$, is some fixed positive number, 
in our setting the spectral gap, $\lambda_2(\bsy) - \lambda_1(\bsy)$, 
is a function defined on an infinite-dimensional domain,
which could be arbitrarily close to 0.

Our main result (see Theorem~\ref{thm:simple} for a full statement) is as follows.
Assuming that the terms $a_j$ in the coefficient decay sufficiently fast 
(in a suitable norm),
then there exists a $\delta > 0$, independent of $\bsy$, such that
the spectral gap of the eigenvalue problem \eqref{eq:evp} satisfies
\[
\lambda_2(\bsy) - \lambda_1(\bsy) \,\geq\, \delta\,,
\quad \text{for all } \bsy \in U\,.
\]

As an example of the important role that the spectral gap plays in 
error analysis, consider the random elliptic eigenvalue problem from \cite{GGKSS18+}.
There, an algorithm using dimension truncation, Quasi--Monte Carlo (QMC) quadrature
and finite element (FE) methods was used to approximate the
expectation with respect to the stochastic parameters
of the smallest eigenvalue. Throughout the error analysis the 
reciprocal of the spectral gap occurred in: 
1) the bounds on the derivatives of the eigenvalues with respect to $\bsy$ 
(required for the QMC and dimension truncation error analysis); 
2) the constants for the FE error; and 
3) the convergence rate for the eigensolver (by Arnoldi iteration). 
In short, the entire error analysis in \cite{GGKSS18+} fails unless the gap can be bounded from below uniformly in $\bsy$.

In the remainder of this section we frame \eqref{eq:evp} as a variational eigenvalue
problem, introduce the function space setting and summarise some
known properties of the eigenvalues.
Then, in Section~\ref{sec:gap} we prove that the spectral gap is uniformly bounded.
Finally, in Section~\ref{sec:num} we perform a numerical experiment for a specific 
example of \eqref{eq:evp}, and present results on the size of the gap
over different realisations of the parameter generated by a QMC pointset.

\subsection{Variational eigenvalue problems}
\label{sec:var}
It is often useful to study the eigenvalue problem \eqref{eq:evp} in its equivalent variational form.
In this section we introduce the variational eigenvalue problem, then present 
some well known properties and tools that are required for our analysis.

First, we clarify the assumptions on the coefficient $a$ and our setting.

\begin{assumption}
\label{asm:coeff}
\hfill
\begin{enumerate}
\item\label{itm:coeff} The coefficient is of the form
\begin{align}
 a(\bsx, \bsy) \,=\, a_0(\bsx) + \sum_{j = 1}^\infty y_j a_j(\bsx)\,,
\label{eq:a_general}
\end{align}
with $a_j \in L^\infty(D)$, for all $j \ge 0$.
\item\label{itm:amin} There exists $0 < \amin < \amax <\infty$ such that $\amin \leq a(\bsx, \bsy) \leq \amax$,
 for all $\bsx \in D$, $\bsy \in U$.
\item\label{itm:summable} For some $p \in (0, 1)$,
\begin{align*}
\sum_{j = 1}^\infty \nrm{a_j}{L^\infty}^p \,<\, \infty.
\end{align*}
%
\end{enumerate}
\end{assumption}

The last condition (Assumption~\ref{asm:coeff}.\ref{itm:summable})
is the same as is required for the QMC and dimension truncation analysis for
corresponding source problems (see \cite{KSS12}). Note that in that
paper they also allow $p = 1$, but with an extra condition on the size of the sum.

Also, the assumption from the Introduction that each $y_j$ is uniformly
distributed is not a restriction, the important point is that each $y_j$
belongs to a bounded interval.

Let $V = H^1_0(D)$ equipped with the norm $\nrm{v}{V} \coloneqq \nrm{\nabla v}{L^2}$, 
and let $V^*$ denote the dual of $V$.
We identify $L^2(D)$ with its dual, and denote the inner product on $L^2(D)$ 
by $\langle \cdot, \cdot \rangle$, which can be continuously extended 
to a duality pairing on $V \times V^*$, also denoted $\langle \cdot, \cdot \rangle$.
Note that we have the following chain of compact embeddings 
$V \compbed L^2(D) \compbed V^*$.
The parameter domain $U$ is equipped with the topology and metric of $\ell_\infty$.

Next, define the (parametric) symmetric bilinear form $\calA: U \times V \times V \to \R$ by
\begin{align*}
\calA(\bsy; w, v) \,\coloneqq\, \int_D a(\bsx, \bsy) \nabla w(\bsx) \cdot \nabla v(\bsx) \, \rd\bsx\,,
\end{align*}
which is also an inner product on $V$.

In this setting, for each $\bsy \in U$, the variational eigenvalue problem equivalent to \eqref{eq:evp} is:
Find $0 \neq u(\bsy) \in V$ and $\lambda(\bsy) \in \R$ such that
\begin{align}
\label{eq:varevp}
\calA(\bsy; u(\bsy), v) &\,=\, 
\lambda(\bsy) \langle u(\bsy), v \rangle,\quad \text{for all } v \in V\,,\\
\nrm{u(\bsy)}{L^2} &\,=\, 1\,.
\nonumber
\end{align}

It follows from Assumption~A\ref{asm:coeff} that the bilinear form $\calA(\bsy)$ 
is coercive and bounded, uniformly in~$\bsy$:
\begin{align}
\label{eq:A_coerc}
\calA(\bsy; v, v) \,&\geq\, a_{\min}\nrm{v}{V}^2\,, 
\quad &\text{for all } v \in V\,,&
\quad \text{and}\\
\label{eq:A_bounded}
\calA(\bsy; w, v) \,&\leq\, a_{\max}
\nrm{w}{V}\nrm{v}{V}\,,
\quad &\text{for all } w, v \in V\,.
\end{align}

As a consequence, for each $\bsy$ we have a self-adjoint and coercive eigenvalue problem.
Therefore, it is well known (see, e.g., \cite{BO91,Hen06})
that \eqref{eq:varevp} has a countable sequence of positive, real eigenvalues,
which (counting multiplicities) we write as
\[
0 \,<\, \lambda_1(\bsy) \,\leq\, \lambda_2(\bsy) \,\leq\, \cdots\,,
\]
and the corresponding eigenvectors are denoted by $u_1(\bsy), u_2(\bsy), \ldots \in V$.

The \emph{min-max principle} \cite[(8.36)]{BO91}
\[
\lambda_k(\bsy) \,=\, 
\min_{\substack{V_k \subset V\\\dim(V_k) = k}}\max_{v \in V_k} 
\frac{\calA(\bsy; v, v)}{\langle v, v\rangle}\,,
\]
allows us to bound each eigenvalue above and below independently of $\bsy$.
Indeed, by \eqref{eq:A_coerc} and \eqref{eq:A_bounded} we have
\[
\amin\min_{\substack{V_k \subset V\\\dim(V_k) = k}}\max_{v \in V_k} 
\frac{\langle\nabla v, \nabla v \rangle}{\langle v, v \rangle}
\,\leq\, \lambda_k(\bsy) \,\leq\,
\amax \min_{\substack{V_k \subset V\\\dim(V_k) = k}}\max_{v \in V_k} 
\frac{\langle\nabla v, \nabla v \rangle}{\langle v, v \rangle}\,.
\]
Now, using the min-max properties of the $k$th eigenvalue of the Dirichlet Laplacian on $D$, 
which we denote by $\chi_k$, the bounds above simplify to
\begin{equation}
\label{eq:eval_bnd}
\amin \chi_k \,\leq\, \lambda_k(\bsy) \,\leq\, \amax \chi_k\,.
\end{equation}

To consider our problem in the framework of Kato \cite{Kato84} 
for perturbations of linear operators, we introduce, for each $\bsy \in U$,
the solution operator $T(\bsy): V^*\rightarrow V$, which for $f \in V^*$ is 
defined by
\begin{equation}
\label{eq:T_def}
\calA(\bsy; T(\bsy) f, v) \,=\, \langle f, v \rangle
\quad \text{for all } v \in V\,.
\end{equation}
Clearly, $\mu = 1/\lambda$ is an eigenvalue of $T(\bsy)$ if and only if
$\lambda$ is an eigenvalue of \eqref{eq:varevp}, and their eigenspaces coincide.
Alternatively, we can consider the operator $T(\bsy): L^2(D) \to L^2(D)$.
In this case, $T(\bsy)$ is self-adjoint with respect to the $L^2$ inner product 
due to the symmetry of $\calA(\bsy)$; it is compact because $V \compbed L^2(D)$; and
finally, it is bounded due to the Lax-Milgram theorem, which states that for each 
$f \in L^2(D)$ there is a unique $T(\bsy)f \in V$ satisfying \eqref{eq:T_def},
with
\begin{equation}
\label{eq:T_LaxMil}
\nrm{T(\bsy)f}{V} \,\leq\, \frac{1}{\amin} \nrm{f}{V^*} \,\leq\, \frac{1}{\amin\sqrt{\chi_1}}\nrm{f}{L^2}\,.
\end{equation}
In the last inequality, we have used the Poincar\'e inequality: 
\begin{align}\label{eq:poin}
\nrm{v}{L^2} \,\leq\, \evalueLap^{-1/2}\nrm{v}{V} \,,
\quad \text{for } v \in V\,,
\end{align}
and a standard duality argument.
Note that we have expressed the Poincar\'e constant
in terms of the smallest eigenvalue of the Dirichlet Laplacian on $D$,
using again the min-max principle.

\section{Bounding the spectral gap}
\label{sec:gap}
The Krein-Rutman theorem guarantees that for every $\bsy$
the fundamental eigenvalue $\lambda_1(\bsy)$ is simple, see,
e.g., \cite[Theorems 1.2.5 and 1.2.6]{Hen06}. 
However, it does not provide any quantitative
statements about the size of the spectral gap, $\lambda_2(\bsy) - \lambda_1(\bsy)$,
for different parameter values $\bsy$. 
As discussed in the Introduction, when studying numerical methods for eigenvalue problems 
in a UQ setting (see, e.g., \cite{GGKSS18+})
several areas of the error analysis require  {\em uniform positivity} of this  gap over all $\bsy \in U$. 
Here, we prove the required uniform positivity under the conditions of Assumption~A\ref{asm:coeff}, in particular  A\ref{asm:coeff}.\ref{itm:summable}.

An explicit bound on the spectral gap can be obtained
in slightly different settings or 
by assuming tighter restrictions on the coefficients.
For example, for Schr\"odinger operators ($-\Delta + \calV$) on $D$ with a 
weakly convex potential $\calV$ and Dirichlet boundary conditions, 
\cite{AC11} gives an explicit lower bound on the fundamental gap.
Alternatively, using the upper and lower bounds on the eigenvalues \eqref{eq:eval_bnd},
we can determine restrictions on $\amin$ and $\amax$ such that the gap is bounded away from 0.
Explicitly, if the coefficient $a$ is such that $\amin$ and $\amax$ satisfy
\begin{equation}
\label{eq:a_gap_cond}
\frac{\amin}{\amax} \,>\,
\frac{\chi_1}{\chi_2}\,,
\end{equation}
then, by \eqref{eq:eval_bnd}, $\lambda_2(\bsy) - \lambda_1(\bsy) \geq \amin \chi_2 - \amax \chi_1 >  0$.
However, the condition \eqref{eq:a_gap_cond} may prove to be too restrictive.

The general idea of our proof is to use the continuity of the eigenvalues to show
that a non-zero minimum of the gap exists. A complication that arises in this strategy is
that the parameter domain $U$ is not compact, so we cannot immediately 
conclude the existence of such a minumum; we know that $U$ cannot be compact 
in the topology of $\ell_\infty$ because it is the unit ball of $\ell_\infty$,
and the unit ball of an infinite-dimensional Banach space is not compact.
Our solution is based on the fact that although there are infinitely-many parameters,
because of the decay of the terms in the coefficient 
(see Assumption~A\ref{asm:coeff}.\ref{itm:summable}), the contribution of 
a parameter $y_j$ decreases as $j$ increases.
Specifically, we reparametrise \eqref{eq:varevp} as an equivalent eigenvalue problem
whose parameters do belong to a compact set.

The first step is the following elementary lemma, which shows that subsets of   
$\ell^\infty$ that are majorised by an $\ell^q$ sequence (for some  $1 < q< \infty$) are compact.

\begin{Lemma}
\label{lem:compact_subset}
Let $\bsalpha \in \ell^q$ for some  $1 < q < \infty$. The set $U(\bsalpha) \subset \ell^\infty$ 
given by
\begin{equation*}
U(\bsalpha) \,\coloneqq\, \left\{\bsw \in \ell^\infty 
: |w_j| \leq \frac{1}{2} |\alpha_j|\right\}
\end{equation*}
is a compact subset of $\ell^\infty$.
\end{Lemma}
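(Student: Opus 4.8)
The plan is to show that $U(\bsalpha)$ is sequentially compact, which for a metric space is equivalent to compactness. So I would take an arbitrary sequence $(\bsw^{(n)})_{n \in \N}$ in $U(\bsalpha)$ and extract a subsequence converging in the $\ell^\infty$ norm to some limit $\bsw^\ast$, which I must then check also lies in $U(\bsalpha)$.

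First I would use a diagonal argument to get coordinatewise convergence. For each fixed $j$, the scalar sequence $(w^{(n)}_j)_n$ lies in the compact interval $[-\tfrac12|\alpha_j|, \tfrac12|\alpha_j|] \subset \R$, so by Bolzano--Weierstrass and the usual diagonalisation over $j \in \N$ I obtain a single subsequence (still written $\bsw^{(n)}$) such that $w^{(n)}_j \to w^\ast_j$ as $n \to \infty$ for every $j$. Passing to the limit in the inequality $|w^{(n)}_j| \le \tfrac12|\alpha_j|$ shows $|w^\ast_j| \le \tfrac12|\alpha_j|$ for all $j$; since $\bsalpha \in \ell^q \subset \ell^\infty$, this gives $\bsw^\ast \in \ell^\infty$ and in fact $\bsw^\ast \in U(\bsalpha)$.

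The main obstacle — the step that actually uses $q < \infty$ rather than merely boundedness — is upgrading coordinatewise convergence to convergence in the $\ell^\infty$ norm, i.e.\ controlling the tail uniformly in $n$. Here I would exploit that $\bsalpha \in \ell^q$ forces $|\alpha_j| \to 0$: given $\eps > 0$, choose $J$ so large that $|\alpha_j| < \eps$ for all $j > J$. Then for every $n$ and every $j > J$ we have $|w^{(n)}_j - w^\ast_j| \le |w^{(n)}_j| + |w^\ast_j| \le |\alpha_j| < \eps$, so the tail beyond $J$ is uniformly small. For the finitely many indices $j \le J$, coordinatewise convergence gives an $N$ with $|w^{(n)}_j - w^\ast_j| < \eps$ for all $n \ge N$ and all $j \le J$. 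Combining, $\nrm{\bsw^{(n)} - \bsw^\ast}{\ell^\infty} = \sup_j |w^{(n)}_j - w^\ast_j| \le \eps$ for $n \ge N$, proving $\bsw^{(n)} \to \bsw^\ast$ in $\ell^\infty$.

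Since every sequence in $U(\bsalpha)$ has a subsequence converging in $\ell^\infty$ to a point of $U(\bsalpha)$, the set is sequentially compact, hence compact. I note that the hypothesis $q > 1$ is not essential for this argument — $\bsalpha \in \ell^q$ for any finite $q$, or indeed any $\bsalpha$ with $\alpha_j \to 0$, suffices — but stating it for $1 < q < \infty$ is convenient for the application, where $\bsalpha$ will be built from the sequence $(\nrm{a_j}{L^\infty})_j$, which lies in $\ell^p \subset \ell^q$ for any $q \in (p, \infty)$ by Assumption~A\ref{asm:coeff}.\ref{itm:summable}.
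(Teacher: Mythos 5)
Your proof is correct, and it takes a genuinely more elementary route than the paper's. The paper extracts a convergent subsequence by observing that $U(\bsalpha) \subset \ell^q$ is bounded, invoking reflexivity of $\ell^q$ (this is where $1 < q < \infty$ is used) to get a weakly convergent subsequence, recovering coordinatewise convergence via the coordinate functionals, and then upgrading to strong $\ell^q$ convergence with a tail estimate on $\sum_{j>J}|\alpha_j|^q$ before finally passing to $\ell^\infty$ via the embedding $\nrm{\cdot}{\ell^\infty} \le \nrm{\cdot}{\ell^q}$. You bypass the functional-analytic machinery entirely: Bolzano--Weierstrass plus a diagonal argument gives the coordinatewise convergent subsequence directly, and you carry out the tail estimate in $\ell^\infty$ itself, using only that $|\alpha_j| \to 0$ so that $|w^{(n)}_j - w^\ast_j| \le |\alpha_j|$ is uniformly small beyond a cutoff. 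Your closing observation is also correct and sharpens the statement: the hypothesis $\bsalpha \in \ell^q$ with $1 < q < \infty$ can be relaxed to $\alpha_j \to 0$ (i.e.\ $\bsalpha \in c_0$), whereas the paper's reflexivity step genuinely needs $1 < q < \infty$. What the paper's approach buys in exchange is reuse of standard theorems (reflexivity, weak compactness) at the cost of a less self-contained argument; your approach is shorter, requires no functional analysis beyond metric-space compactness, and identifies the minimal hypothesis.
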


\begin{proof}
Since $\ell^\infty$ is a normed (and hence a metric) space, $U(\bsalpha)$
is compact if and only if it is sequentially compact. To show sequential compactness of $U(\bsalpha)$, 
take any sequence 
$\{\bsy^{(n)}\}_{n \geq 1} \subset U(\bsalpha)$. 
Clearly, by definition of $U(\bsalpha)$,   each $\bsy^{(n)} \in \ell^q$ and moreover, 
\[
\nrm{\bsy^{(n)}}{\ell^q} \,\leq\, \frac{1}{2}\nrm{\bsalpha}{\ell^q} \,<\, \infty
\quad \text{for all } n \in \N\, .
\]
So $\bsy^{(n)}$ is a bounded sequence in $\ell^q$. 
Since $q < \infty$,  $\ell^q$ is a reflexive Banach space, and so  
by \cite[Theorem~3.18]{Brezis11} $\{\bsy^{(n)}\}_{n \geq 1}$ has a subsequence
that converges weakly to a limit in  $\ell^q$. We denote this limit by $\bsy^*$, and,   
with a slight abuse of notation, we denote the convergent subsequence again  by 
$\{\bsy^{(n)}\}_{n \geq 1}$.

We now prove that $\bsy^* \in U(\bsalpha)$ and that the weak convergence is in fact  strong,  
i.e.  we show  $\bsy^{(n)} \rightarrow \bsy^*$ in $\ell^\infty$, as 
$n \rightarrow \infty$.
For any  $j \in \N$, consider the linear functional $f_j:
\ell^q \rightarrow \R$ given by $f_j(\bsw) = w_j$,  where $w_j$
  denotes the $j$th element of the sequence $\bsw = (w_j)_{j \geq 1} \in \ell^q$. 
Clearly, $f_j \in (\ell^q)^*$ (the dual space) and using the weak convergence established above, it follows that  
\[
 y_j^{(n)} \,=\, f_j(\bsy^{(n)}) \,\rightarrow\, f_j(\bsy^*) \,=\, y^*_j 
\quad \text{as } n \rightarrow \infty\,, \quad \text{for each fixed} \ \ j .   
 \] 
That is, we have componentwise convergence. 
 Furthermore, since $|y_j^{(n)}| \leq \frac{1}{2}|\alpha_j|$ it follows that 
 $|y_j^*| \leq \frac{1}{2}|\alpha_j|$ for each $j$,  
 and hence $\bsy^* \in U(\bsalpha)$.
 
 Now, for any $J \in \N$ we can write
\begin{align}
\label{eq:q_norm_split}\nonumber 
\nrm{\bsy^{(n)} - \bsy^*}{\ell^q}^q
\,&=\, 
\sum_{j = 1}^J |y_j^{(n)} - y_j^*|^q
+ \sum_{j = J + 1}^\infty |y_j^{(n)} - y_j^*|^q\\
\,&\leq\,
J\max_{j = 1, 2, \ldots, J} |y_j^{(n)} - y_j^*|^q
+ \sum_{j = J + 1}^\infty |\alpha_j|^q\,.
 \end{align}
 
Let $\varepsilon > 0$. Since $\bsalpha \in \ell^q$, we can choose $J \in \N$ such that
\[
\sum_{j = J + 1}^\infty |{\alpha_j}|^q \,\leq\, \frac{\varepsilon^q}{2}\,,
\]
and since $\bsy^{(n)}$ converges componentwise we can choose $K \in \N$ such that
\[
|y_j^{(n)} - y_j^*| \,\leq\, (2J)^{-1/q} \varepsilon
\quad \text{for all } j = 1, 2, \ldots, J
\text{ and } n \geq K\,.
\]
Thus, by \eqref{eq:q_norm_split} we have $\nrm{\bsy^{(n)} - \bsy^*}{\ell^q}^q \leq \varepsilon^q$
for all $n \geq K$, and hence $\nrm{\bsy^{(n)} - \bsy^*}{\ell^q} \rightarrow 0$ 
as $n \rightarrow \infty$.
Because $\nrm{\bsw}{\ell^\infty} \leq \nrm{\bsw}{\ell^q}$ when 
$\bsw \in \ell^q$  and $1 < q < \infty$, 
this also implies that $\bsy^{(n)} \rightarrow \bsy^*$ in $\ell^\infty$, completing the proof.
\end{proof}

A key property following from the perturbation theory of Kato \cite{Kato84} 
is that the eigenvalues $\lambda_k(\bsy)$ are continuous in $\bsy$,  which 
for completeness is shown 
below in Proposition~\ref{prop:eval_cts}. First, recall that $T(\bsy)$ is 
the solution operator as defined in \eqref{eq:T_def}, and
let $\Sigma(T(\bsy))$ denote the spectrum of $T(\bsy)$.

\begin{Proposition}
\label{prop:eval_cts}
Let Assumption~A\ref{asm:coeff} hold. Then the eigenvalues 
$\lambda_1, \lambda_2 , \ldots$ are Lipschitz continuous in~$\bsy$.
\end{Proposition}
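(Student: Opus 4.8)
The plan is to argue through the solution operator $T(\bsy)$ from \eqref{eq:T_def} rather than directly with eigenfunctions, exploiting that, viewed on $L^2(D)$, $T(\bsy)$ is bounded, compact and self-adjoint, with eigenvalues exactly the reciprocals $\mu_k(\bsy) = 1/\lambda_k(\bsy)$. There are three stages: (i) show $\bsy \mapsto T(\bsy)$ is Lipschitz in the $L^2(D)\to L^2(D)$ operator norm; (ii) deduce each $\mu_k$ is Lipschitz from the standard stability of eigenvalues of self-adjoint operators under norm perturbation; (iii) invert, using the uniform upper bound on $\lambda_k$ from \eqref{eq:eval_bnd} to control the reciprocal.

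For stage (i), fix $\bsy,\bsz \in U$ and $f \in L^2(D)$, and put $w := T(\bsy)f$, $\tilde w := T(\bsz)f$. Subtracting the two identities \eqref{eq:T_def} gives, for all $v \in V$,
\[
\calA(\bsy; w-\tilde w, v) \,=\, \int_D \big(a(\bsx,\bsz) - a(\bsx,\bsy)\big)\,\nabla\tilde w(\bsx)\cdot\nabla v(\bsx)\,\rd\bsx .
\]
Since $a(\bsx,\bsz)-a(\bsx,\bsy) = \sum_{j\ge 1}(z_j-y_j)a_j(\bsx)$ and $(\nrm{a_j}{L^\infty})_{j\ge1} \in \ell^p$, hence in $\ell^1$ because $p<1$ (Assumption~A\ref{asm:coeff}.\ref{itm:summable}), we have the pointwise bound $|a(\bsx,\bsz)-a(\bsx,\bsy)| \le C_{\bsa}\,\nrm{\bsy-\bsz}{\ell^\infty}$ with $C_{\bsa} := \sum_{j\ge1}\nrm{a_j}{L^\infty} < \infty$. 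Taking $v = w-\tilde w$, bounding the left side below by coercivity \eqref{eq:A_coerc} and the right side above by this pointwise bound together with Cauchy--Schwarz, then cancelling one factor of $\nrm{w-\tilde w}{V}$, gives $\nrm{w-\tilde w}{V} \le (C_{\bsa}/\amin)\,\nrm{\bsy-\bsz}{\ell^\infty}\,\nrm{\tilde w}{V}$. Bounding $\nrm{\tilde w}{V} = \nrm{T(\bsz)f}{V}$ by \eqref{eq:T_LaxMil} and then passing from the $V$-norm down to the $L^2$-norm via the Poincar\'e inequality \eqref{eq:poin} yields $\nrm{T(\bsy)f - T(\bsz)f}{L^2} \le (C_{\bsa}/(\amin^2\chi_1))\,\nrm{\bsy-\bsz}{\ell^\infty}\,\nrm{f}{L^2}$, i.e.\ $\nrm{T(\bsy)-T(\bsz)}{L^2\to L^2} \le (C_{\bsa}/(\amin^2\chi_1))\,\nrm{\bsy-\bsz}{\ell^\infty}$.

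For stage (ii), since $T(\bsy)$ is compact and self-adjoint on $L^2(D)$ with positive eigenvalues $\mu_1(\bsy) \ge \mu_2(\bsy) \ge \cdots$, the min--max (Courant--Fischer) characterisation gives the Weyl-type perturbation bound $|\mu_k(\bsy) - \mu_k(\bsz)| \le \nrm{T(\bsy)-T(\bsz)}{L^2\to L^2}$, so each $\mu_k$ is Lipschitz by stage (i). For stage (iii), \eqref{eq:eval_bnd} gives $\lambda_k(\bsy) \le \amax\chi_k$ uniformly in $\bsy$, hence $\mu_k(\bsy) \ge 1/(\amax\chi_k) > 0$ for all $\bsy$, and therefore
\[
|\lambda_k(\bsy) - \lambda_k(\bsz)| \,=\, \frac{|\mu_k(\bsy) - \mu_k(\bsz)|}{\mu_k(\bsy)\,\mu_k(\bsz)} \,\le\, \frac{(\amax\chi_k)^2\,C_{\bsa}}{\amin^2\,\chi_1}\,\nrm{\bsy-\bsz}{\ell^\infty},
\]
which is the asserted Lipschitz continuity (with a constant that may grow with $k$).

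Rather than a genuine obstacle, the step needing most care is stage (i): after subtracting the two copies of \eqref{eq:T_def} one must keep straight which argument of $\calA$ carries which parameter, and one must deliberately chain \eqref{eq:T_LaxMil} with \eqref{eq:poin} so as to land in the $L^2\to L^2$ operator norm — this is essential because the self-adjointness, and hence the eigenvalue stability of stage (ii), lives on $L^2(D)$, whereas coercivity and Lax--Milgram only deliver $V$-norm estimates. The one conceptual point worth emphasising is that the passage from $\mu_k$ to $\lambda_k$ in stage (iii) relies on $\mu_k$ being bounded away from $0$ \emph{uniformly} over the infinite-dimensional $U$, which is exactly what \eqref{eq:eval_bnd} provides.
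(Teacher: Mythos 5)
Your proposal is correct, and the overall architecture — Lipschitz continuity of $\bsy \mapsto T(\bsy)$ in the $L^2\to L^2$ operator norm, followed by eigenvalue stability for compact self-adjoint operators, followed by passage from $\mu_k = 1/\lambda_k$ back to $\lambda_k$ via the uniform upper bound \eqref{eq:eval_bnd} — is the same as the paper's, with identical constants. Your stage~(i) reproduces the paper's estimate $\nrm{T(\bsy)-T(\bsz)}{L^2\to L^2} \le (\amin^2\chi_1)^{-1}\nrm{a(\bsy)-a(\bsz)}{L^\infty}$ and the subsequent $\ell^1$-bound on the coefficient difference essentially verbatim.

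The genuine difference is in stage~(ii). The paper reaches $|\mu_k(\bsy) - \mu_k(\bsy')| \le \nrm{T(\bsy)-T(\bsy')}{L^2\to L^2}$ in a somewhat roundabout way: it first cites Kato's bound on the Hausdorff distance between the full spectra, which only guarantees that \emph{some} eigenvalue of $T(\bsy')$ is close to $\mu_k(\bsy)$, and then invokes Kato's continuity of ``finite systems'' of eigenvalues with preservation of multiplicity, together with an informal graph-matching argument (the ordered eigenvalue branches can touch but not cross, so the $k$-th ordered branch is continuous). You instead invoke the Weyl perturbation inequality directly, which is provable in two lines from the Courant--Fischer min--max characterisation of the eigenvalues of compact self-adjoint operators and delivers the index-matched bound in one stroke, with no need to discuss finite systems or multiplicity preservation. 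This is a real simplification: the Weyl route is more elementary, self-contained, and avoids the slightly delicate ``graphs cannot cross or terminate'' argument. What the paper's route buys — and yours does not need here, but is worth being aware of — is machinery (analyticity, holomorphic families, eigenprojection continuity) that generalises beyond Lipschitz estimates of the eigenvalues themselves; but for the statement of this Proposition, your argument is the cleaner one.

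One small point of hygiene: you should say explicitly that the eigenvalues $\mu_k$ are ordered \emph{decreasingly and counted with multiplicity} before applying Courant--Fischer, since Weyl's inequality is a statement about the ordered sequence; you implicitly do this, but it is exactly the point that the paper labours over, so it deserves a sentence.
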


\begin{proof}
We prove the result by establishing the continuity of
the eigenvalues $\mu_k(\bsy)$ of $T(\bsy)$.
Let $\bsy$, $\bsy' \in U$ and consider the operators $T(\bsy), T(\bsy') : L^2(D) \rightarrow L^2(D)$
as defined in \eqref{eq:T_def}.
Since $T(\bsy)$, $T(\bsy')$ are bounded and self-adjoint with respect to $\langle \cdot, \cdot \rangle$,
it follows from \cite[V, \S 4.3 and Theorem~4.10]{Kato84} that we have the
following notion of continuity of $\mu(\cdot)$ in terms of $T(\cdot)$
\begin{equation}
\label{eq:spectrum_cts}
\sup_{\mu \in \Sigma(T(\bsy))} \dist (\mu, \Sigma(T(\bsy')))
\,\leq\, \nrm{T(\bsy) - T(\bsy')}{L^2 \rightarrow L^2}\,.
\end{equation}
%
For an eigenvalue $\mu_k(\bsy) \in \Sigma(T(\bsy))$, \eqref{eq:spectrum_cts} 
implies that there exists a $\mu_{k'}(\bsy') \in \Sigma(T(\bsy'))$ such that
\begin{equation}
\label{eq:spec_cts2}
|\mu_k(\bsy) - \mu_{k'}(\bsy')|
\,\leq\, \nrm{T(\bsy) - T(\bsy')}{L^2 \rightarrow L^2}\,.
\end{equation}
Note that this means there exists an eigenvalue of $T(\bsy')$ close to
$\mu_k(\bsy)$, but does not imply that the $k$th eigenvalue of $T(\bsy')$
is close to $\mu_k(\bsy)$, that is, in \eqref{eq:spec_cts2} $k$ is not necessarily 
equal to $k'$. 
However, consider any $\mu_k(\bsy)$ and let $m$ denote its multiplicity.
Since $m < \infty$, we can assume without loss of generality that the collection 
$\mu_k(\bsy) = \mu_{k + 1}(\bsy) = \cdots = \mu_{k + m - 1}(\bsy)$
is a \emph{finite system} of eigenvalues in the sense of Kato.
It then follows from the discussion in \cite[IV, \S 3.5]{Kato84} that the eigenvalues in this
system depend continuously on the operator with multiplicity preserved.
This preservation of multiplicity is key to our argument, 
since it states that for  $T(\bsy')$ sufficiently close to $T(\bsy)$ there are $m$ 
consecutive eigenvalues 
$\mu_{k'}(\bsy'), \mu_{k' + 1}(\bsy'), \ldots , \mu_{k' + m - 1}(\bsy') \in \Sigma(T(\bsy'))$,
no longer necessarily equal, that are close to $\mu_k(\bsy)$.

A simple argument then shows that
each $\mu_k$ is continuous in the following sense
\begin{equation}
\label{eq:mu_cts_op}
|\mu_k(\bsy) - \mu_k(\bsy')|
\,\leq\, \nrm{T(\bsy) - T(\bsy')}{L^2 \rightarrow L^2}\,.
\end{equation}
To see this, consider, for $k=1,2,\ldots$, the graphs of $\mu_k$ on $U$. Note that the separate graphs can touch (and in principle can even coincide over some subset of $U$), but by definition cannot cross (since at every point in $U$ the successive eigenvalues are nonincreasing); and by the preservation of  multiplicity a graph cannot terminate and a finite set of graphs cannot change multiplicity at an interior point.  Thus by (2.23) the ordered eigenvalues $\mu_k$ must be continuous for each $k \ge 1$  and satisfy \eqref{eq:mu_cts_op}.

It then follows from the relationship $\mu_k(\bsy) = 1/\lambda_k(\bsy)$ along with the upper bound
in \eqref{eq:eval_bnd} that  we have a similar result for the
eigenvalues $\lambda_k$ of \eqref{eq:varevp}:
\begin{equation}
\label{eq:eval_cts_op}
\left|\lambda_k(\bsy) - \lambda_k(\bsy')\right|
\,\leq\, 
(\amax\chi_k)^2\nrm{T(\bsy) - T(\bsy')}{L^2 \rightarrow L^2}\,.
\end{equation}

All that remains is to bound the right hand side of 
\eqref{eq:eval_cts_op} by $\CLip\nrm{\bsy - \bsy'}{\ell^\infty}$,
with $\CLip > 0$ independent of $\bsy$ and $\bsy'$.
To this end, note that since the right hand side
of \eqref{eq:T_def} is independent of $\bsy$ we have
\[
\calA(\bsy; T(\bsy)f, v) \,=\, \calA(\bsy'; T(\bsy')f, v)
\quad 
\text{for all } f \in L^2(D), v \in V\,.
\]
Rearranging and then expanding this gives
\begin{align*}
\calA\left(\bsy; \left(T(\bsy) - T(\bsy')\right)f, v\right))
\,&=\, \calA(\bsy'; T(\bsy')f, v) - \calA(\bsy; T(\bsy')f, v)
\\
\,&=\, \int_D\left[a(\bsx, \bsy') - a(\bsx, \bsy)\right]
\nabla [T(\bsy')f](\bsx) \cdot \nabla v(\bsx)\,\rd\bsx\,.
\end{align*}
Letting $v = (T(\bsy) - T(\bsy'))f \in V$, the left hand side can be bounded from below
using the coercivity \eqref{eq:A_coerc} of $\calA(\bsy)$, 
and the right hand side can be bounded from above using the Cauchy-Schwarz
inequality to give
\[
\amin \nrm{(T(\bsy) - T(\bsy'))f}{V}^2
\,\leq\, \nrm{a(\bsy) - a(\bsy')}{L^\infty}
\nrm{T(\bsy')f}{V}\nrm{(T(\bsy) - T(\bsy'))f}{V}\,.
\]
Dividing by $\amin\nrm{(T(\bsy) - T(\bsy'))f}{V}$ 
and using the upper bound in \eqref{eq:T_LaxMil} we have
\[
\nrm{(T(\bsy) - T(\bsy'))f}{V}
\,\leq\, \frac{1}{\amin^2\sqrt{\chi_1}} \nrm{f}{L^2} \nrm{a(\bsy) - a(\bsy')}{L^\infty}\,.
\]
Then, applying the Poincar\'e inequality \eqref{eq:poin} to the left hand side and
taking the supremum over $f \in L^2(D)$ with $\nrm{f}{L^2} \leq 1$,
in the operator norm we have
\[
\nrm{T(\bsy) - T(\bsy')}{L^2 \rightarrow L^2}
\,\leq\,
\frac{1}{\amin^2\chi_1}\nrm{a(\bsy) - a(\bsy')}{L^\infty}\,.
\]

Using this inequality as an upper bound for \eqref{eq:eval_cts_op} we see
that the eigenvalues inherit the continuity of the coefficient, and so
\begin{equation}
\label{eq:eval_cts_coeff}
|\lambda_k(\bsy) - \lambda_k(\bsy')|
\,\leq\, \frac{\amax^2\chi_k^2}{\amin^2\chi_1}\nrm{a(\bsy) - a(\bsy')}{L^\infty}\,.
\end{equation}
where the constant is clearly independent of $\bsy$ and $\bsy'$.

Finally, to establish Lipschitz continuity with respect to $\bsy$, we 
recall Assumptions~A\ref{asm:coeff}.\ref{itm:coeff} and A\ref{asm:coeff}.\ref{itm:summable},
expand the coefficients in \eqref{eq:eval_cts_coeff} above and use the triangle inequality to give
\[
|\lambda_k(\bsy) - \lambda_k(\bsy')|
\,\leq\, \frac{\amax^2\chi_k^2}{\amin^2\chi_1} \sumj |y_j - y_j'|\nrm{a_j}{L^\infty}
\,\leq\, \frac{\amax^2\chi_k^2}{\amin^2\chi_1}\Bigg(\sumj \nrm{a_j}{L^\infty} \Bigg) 
\nrm{\bsy - \bsy'}{\ell^\infty}\,.
\]
By Assumption~\ref{asm:coeff} the sum is finite, and hence the eigenvalue $\lambda_k(\bsy)$ is Lipschitz in $\bsy$, with the constant clearly independent of $\bsy$.
\end{proof}

Now that we have shown Lipschitz continuity of the eigenvalues and identified suitable compact subsets,
we can prove the main result of this paper:
namely, that the spectral gap is bounded away from 0 uniformly in $\bsy$.
The strategy of the proof is to rewrite 
the coefficient as
\[
a(\bsx, \bsy) \ = \ a_0(\bsx) \ +\  \sumj  \widetilde{y}_j \wa_j(\bsx)\,,
\]
with  $\widetilde{y}_j = \alpha_j y_j$ and $\wa_j(\bsx) = a_j(\bsx)/\alpha_j$, choosing
$\bsalpha \in \ell^q$ to decay slowly enough  
such that $\sumj \Vert \wa_j\Vert_{L^\infty} < \infty$
continues to hold. 
Then, using the intermediate result \eqref{eq:eval_cts_coeff}
from the proof of Proposition~\ref{prop:eval_cts}
we can show that the eigenvalues of the ``reparametrised''  problem  are continuous in the 
new parameter  $\wbsy$,  which now ranges over the compact set $U(\bsalpha)$. 
The required bound on the spectral gap is obtained by using the equivalence of the 
eigenvalues of the original and reparametrised problems. 

\begin{theorem}
\label{thm:simple}
Let Assumption~A\ref{asm:coeff} hold. Then there exists a $\delta > 0$,
independent of $\bsy$, such that
\begin{equation}
\label{eq:gap}
\lambda_2(\bsy) -\lambda_1(\bsy)
\,\geq\, \delta\,.
\end{equation}
\end{theorem}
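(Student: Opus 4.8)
The plan is to carry out the reparametrisation announced above: rewrite the coefficient in terms of parameters ranging over a compact set, so that the eigenvalue gap becomes a continuous function there and hence attains a strictly positive minimum.

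First I would fix the rescaling sequence, using Assumption~A\ref{asm:coeff}.\ref{itm:summable} (this is the only place where $p<1$ enters). For indices $j$ with $a_j\not\equiv0$ set $\alpha_j\coloneqq\nrm{a_j}{L^\infty}^{1-p}$ and $\wa_j\coloneqq a_j/\alpha_j$, so that $\nrm{\wa_j}{L^\infty}=\nrm{a_j}{L^\infty}^{p}$; for the remaining indices set $\alpha_j\coloneqq2^{-j}$ and $\wa_j\coloneqq0$. Then every $\alpha_j>0$, and $\sumj\nrm{\wa_j}{L^\infty}\leq\sumj\nrm{a_j}{L^\infty}^{p}<\infty$; moreover, with $q\coloneqq\max\{2,\,p/(1-p)\}\in(1,\infty)$ one has $(1-p)q\geq p$, and since $\nrm{a_j}{L^\infty}\to0$ it follows that $\sumj\alpha_j^q<\infty$, i.e.\ $\bsalpha\in\ell^q$. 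Setting $\wa(\bsx,\wbsy)\coloneqq a_0(\bsx)+\sumj\widetilde{y}_j\,\wa_j(\bsx)$, the map $\bsy\mapsto\wbsy\coloneqq(\alpha_jy_j)_{j\geq1}$ is a bijection of $U$ onto $U(\bsalpha)$ (because every $\alpha_j>0$), and $\wa(\bsx,\wbsy)=a(\bsx,\bsy)$ for all $\bsx\in D$.

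Next I would transfer the Lipschitz continuity. For $\wbsy\in U(\bsalpha)$ let $\widetilde\lambda_k(\wbsy)$ denote the $k$th eigenvalue of \eqref{eq:varevp} with $a$ replaced by $\wa(\cdot,\wbsy)$. Since $U(\bsalpha)$ is precisely the image of $U$, each coefficient $\wa(\cdot,\wbsy)$ is admissible and in particular satisfies $\amin\leq\wa(\bsx,\wbsy)\leq\amax$; the derivation of \eqref{eq:eval_cts_coeff} used only self-adjointness, coercivity, boundedness and the upper bound in \eqref{eq:eval_bnd}, so it applies verbatim to this family and gives, for $\wbsy,\wbsy'\in U(\bsalpha)$,
\begin{align*}
|\widetilde\lambda_k(\wbsy)-\widetilde\lambda_k(\wbsy')|
\,&\leq\,\frac{\amax^2\chi_k^2}{\amin^2\chi_1}\,\nrm{\wa(\cdot,\wbsy)-\wa(\cdot,\wbsy')}{L^\infty}\\
\,&\leq\,\frac{\amax^2\chi_k^2}{\amin^2\chi_1}\Bigl(\sumj\nrm{\wa_j}{L^\infty}\Bigr)\,\nrm{\wbsy-\wbsy'}{\ell^\infty}\,.
\end{align*}
Hence $\widetilde\lambda_1,\widetilde\lambda_2$, and thus the gap $\widetilde g(\wbsy)\coloneqq\widetilde\lambda_2(\wbsy)-\widetilde\lambda_1(\wbsy)$, are Lipschitz, hence continuous, on $U(\bsalpha)$. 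I note that it is the composition $\wbsy\mapsto\wa(\cdot,\wbsy)\mapsto\widetilde\lambda_k(\wbsy)$ that is Lipschitz; the bare rescaling $\wbsy\mapsto(\widetilde{y}_j/\alpha_j)_{j\geq1}$ need not even be continuous on $\ell^\infty$, which is why one reparametrises the coefficient rather than the abstract map.

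Finally, by Lemma~\ref{lem:compact_subset} the set $U(\bsalpha)$ is compact, so $\widetilde g$ attains its minimum at some $\wbsy^*\in U(\bsalpha)$; since $\wa(\cdot,\wbsy^*)$ is an admissible coefficient, the Krein--Rutman theorem makes $\widetilde\lambda_1(\wbsy^*)$ simple, whence $\delta\coloneqq\widetilde g(\wbsy^*)>0$, a constant independent of $\bsy$. For arbitrary $\bsy\in U$, putting $\wbsy=(\alpha_jy_j)_{j\geq1}\in U(\bsalpha)$ gives $\lambda_2(\bsy)-\lambda_1(\bsy)=\widetilde\lambda_2(\wbsy)-\widetilde\lambda_1(\wbsy)=\widetilde g(\wbsy)\geq\delta$, which is \eqref{eq:gap}. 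I expect the only genuine difficulty to lie in the first step: one must slow the decay of the parameters enough that $\sumj\nrm{\wa_j}{L^\infty}<\infty$ is preserved (so that the reparametrised eigenvalues remain Lipschitz), while still keeping $\bsalpha$ in some $\ell^q$ with $1<q<\infty$ so that Lemma~\ref{lem:compact_subset} applies; that this trade-off is possible is exactly the content of Assumption~A\ref{asm:coeff}.\ref{itm:summable} with $p<1$. The remainder is the routine combination of \eqref{eq:eval_cts_coeff}, compactness and Krein--Rutman.
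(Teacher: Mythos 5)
Your proposal is correct and follows essentially the same strategy as the paper's proof: reparametrise the coefficient with a rescaling sequence $\bsalpha$ that decays slowly enough to keep $\sumj\nrm{\wa_j}{L^\infty}<\infty$ yet lies in some $\ell^q$ with $1<q<\infty$, then invoke Lemma~\ref{lem:compact_subset}, the Lipschitz estimate \eqref{eq:eval_cts_coeff}, and simplicity of $\lambda_1$. The only (inessential) technical differences are your choices of $\alpha_j$ and $q$: the paper sets $\alpha_j=\nrm{a_j}{L^\infty}^{1-p}+1/j$ (the $1/j$ term ensuring positivity without a case split) and assumes WLOG $p>1/2$ so that $q=p/(1-p)>1$, whereas you handle vanishing $a_j$ by a separate case ($\alpha_j=2^{-j}$) and take $q=\max\{2,p/(1-p)\}$ to guarantee $q>1$ directly; both devices achieve the same trade-off.
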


\begin{proof}
We can assume, without loss of generality, that  $p > 1/2$, 
because if Assumption~A\ref{asm:coeff}.\ref{itm:summable} holds 
with exponent $p' \leq 1/2$ then it also holds for all $p \in (p', 1)$. 
Consequently, set $\eps = 1-p \in (0, 1/2)$ and consider the sequence 
$\bsalpha$ defined by 
\begin{align} \label{eq:defalpha} 
\alpha_j \,=\, \Vert a_j\Vert ^\eps_{L^\infty} + 1/j, \quad 
\text{for each} \ \ j \in \N .   
\end{align}
Setting  $q = p/\eps = p/(1-p) \in (1,\infty)$, using Assumption A\ref{asm:coeff}.3 and  
the triangle inequality, it is easy to see that 
$\bsalpha \in \ell^q$. Moreover, the inclusion of $1/j$ in
\eqref{eq:defalpha} ensures that  $\alpha_j \not= 0$, for all $j
\geq 1$. Hence, from now on, for  $\bsw = (w_j)_{j = 1}^\infty \in \ell^\infty$, 
 we can define the sequences  $\bsalpha \bsw = (\alpha_j w_j)_{j=1}^\infty$ and  $\bsw/\bsalpha = (w_j/\alpha_j)_{j=1}^\infty$. 
Then, recalling the definition of $U(\bsalpha)$ in Lemma \ref{lem:compact_subset}, 
it is easy to see that$\wbsy \in U(\bsalpha)$ if and only if $\wbsy/\bsalpha \in U$,
and moreover, $\bsy \in U$ if and only if $\bsalpha \bsy \in U(\bsalpha)$.

Now for $\bsx \in D$ and $\wbsy \in U(\bsalpha)$, we define
\[
\wa(\bsx, \wbsy) = a_0(\bsx) + \sum_{j=1}^\infty \widetilde{y}_j \frac{a_j(\bsx)}{\alpha_j}\,,
\]
from which it is easily seen that 
\begin{equation}
\label{eq:coeff_1to1}
\wa(\bsx, \wbsy) = a (\bsx, \wbsy/\bsalpha)\,.
\end{equation}
Then we set 
\[
\widetilde{\calA}(\widetilde{\bsy}; w, v)
\,\coloneqq\, \int_D \widetilde{a}(\bsx, \widetilde{\bsy}) \nabla w(\bsx) . \nabla v(\bsx) \, \rd \bsx
\quad \text{for } w, v \in V\, ,
\]
and we consider the following reparametrised eigenvalue problem: 
Find $\widetilde{\lambda}(\widetilde{\bsy}) \in \R$
and $0 \neq \widetilde{u}(\widetilde{\bsy}) \in V$ such that
\begin{align}
\label{eq:reparam_evp}\nonumber
\widetilde{\calA}(\widetilde{\bsy}; \widetilde{u}(\widetilde{\bsy}), v)
\,&=\, \widetilde{\lambda}_k(\widetilde{\bsy}) \langle\widetilde{u}(\widetilde{\bsy}), v\rangle
\quad \text{for all } v \in V\,,\\
\nrm{\widetilde{u}(\widetilde{\bsy})}{L^2} \,&=\, 1\,.
\end{align}
Note that because we have equality between the original and reparametrised coefficients 
\eqref{eq:coeff_1to1}, for each $\bsy \in U$, and corresponding $\wbsy = \bsalpha \bsy\in U(\bsalpha)$,
\eqref{eq:coeff_1to1} implies
that there is equality between eigenvalues
$\lambda_k(\bsy)$ of \eqref{eq:varevp} and  
$\wlam_k(\wbsy)$ of 
the reparametrised eigenvalue problem \eqref{eq:reparam_evp}
\begin{equation}
\label{eq:eval_1to1}
\lambda_k(\bsy) \,=\, \widetilde{\lambda}_k(\wbsy)
\quad \text{for } k \in \N\,,
\end{equation}
and their eigenspaces coincide.

Moreover, for an eigenvalue $\wlam_k(\wbsy)$ of \eqref{eq:reparam_evp}, 
using \eqref{eq:eval_1to1} in the inequality \eqref{eq:eval_cts_coeff} we have
\[
|\widetilde{\lambda}_k(\widetilde{\bsy}) - \widetilde{\lambda}_k(\widetilde{\bsy}')|
\,\leq\, \frac{\amax^2\chi_k^2}{\amin^2\chi_1} 
\nrm{a(\wbsy/\bsalpha) - a(\wbsy/\bsalpha)}{L^\infty}\,,
\]
which after expanding the coefficient and using the triangle inequality becomes
\[
|\widetilde{\lambda}_k(\widetilde{\bsy}) - \widetilde{\lambda}_k(\widetilde{\bsy}')|
\,\leq\,
\Bigg(\underbrace{\frac{\amax^2\chi_k^2}{\amin^2\chi_1} \sum_{j = 1}^\infty 
\frac{1}{\alpha_j}\nrm{a_j}{L^\infty}}_{\wCLip}\Bigg)
\nrm{\wbsy - \wbsy'}{\ell^\infty}\,,
\]
where $\wCLip$ is clearly independent of $\wbsy$ and $\wbsy'$.
Now by \eqref{eq:defalpha} together with Assumption~A\ref{asm:coeff}, 
we have 
\[
\sumj \frac{\Vert a_j\Vert_{L^\infty}}{\alpha_j} \ \leq \ \sumj  \Vert a_j\Vert_{L^\infty}^{1-\eps} \ = \  \sumj  \Vert a_j\Vert_{L^\infty}^{p} <\  \infty \,.
\] 
Thus, $\wCLip < \infty$ and hence the reparametrised eigenvalues 
are continuous on $U(\bsalpha)$.

It immediately follows that the spectral gap 
$\widetilde{\lambda}_2(\widetilde{\bsy}) - \widetilde{\lambda}_1(\widetilde{\bsy})$
is also continuous on $U(\bsalpha)$, which by Lemma~\ref{lem:compact_subset} is a 
compact subset of $\ell^\infty$. Therefore, the non-zero
minimum is attained giving that the spectral gap  
$\widetilde{\lambda}_2(\widetilde{\bsy}) - \widetilde{\lambda}_1(\widetilde{\bsy})$ 
is uniformly positive.
Finally, because there is equality between the original 
and reparametrised eigenvalues \eqref{eq:eval_1to1}
the result holds for the original problem over all $\bsy \in U$.
\end{proof}

The paper \cite{GGKSS18+} proves a similar result to Theorem~\ref{thm:simple} in
a more general setting, the same strategy is used there also.

\section{Numerical results}
\label{sec:num}
For our numerical results, we consider the 1-dimensional domain $D = (0, 1)$, and 
let the basis functions that define the coefficient in \eqref{eq:a_general} be given by
\begin{equation}
\label{eq:a_j}
 a_j(x) \,=\, 
\begin{cases}
a_0 &\text{for } j = 0,\\[2mm]
\displaystyle \frac{c_0}{j^2} \sin(j\pi x)\,,  &\text{for } j = 1, 2, \ldots, s,\\[2mm]
0 & \text{for } j > s,
\end{cases}
\end{equation}%
where $a_0 > 0$ and $c_0\in \R$ will determine the values of
$\amin$ and $\amax$.  Note that the stochastic dimension is
here fixed at $s = 100$.
Since multiplying the coefficient by any constant factor will simply
rescale the eigenvalues by that same factor, without loss of generality
we can henceforth set $a_0 = 1$ and vary $c_0$.

For a coefficient \eqref{eq:a_general} given by the basis functions \eqref{eq:a_j},
we can obtain a formula for the bounds $\amin$, $\amax$ as follows
\begin{align*}
\amin \,=\, 1 - \frac{c_0}{2} \sum_{j = 1}^s \frac{1}{j^2} \,=\, 1 - 0.81c_0\,,\\
\amax \,=\, 1 + \frac{c_0}{2} \sum_{j = 1}^s \frac{1}{j^2} \,=\, 1 + 0.81c_0\,.
\end{align*}
We remark that these bounds are not sharp.


We also consider the so-called \emph{log-normal} coefficient:
\begin{equation}
\label{eq:lognormal}
a(x, \bsy) = a_* + \exp\Bigg(\sum_{j = 1}^{s} \Phi^{-1}(y_j + \tfrac{1}{2}) a_j(x)\Bigg)\,,
\end{equation}
where $a_* \geq 0$, $a_j$ are as in \eqref{eq:a_j}, and 
$\Phi^{-1}$ is the inverse of the normal cumulative distribution function. 
In this way each $\Phi^{-1}(y_j + \tfrac{1}{2}) \sim\calN(0, 1)$. 
Since $\Phi^{-1}$ maps $[0, 1]$ to $\R$, the coefficient \eqref{eq:lognormal}
is unbounded, and although it is positive ($\amin = a_*$), 
for $a_* = 0$ it could be arbitrarily close to 0.
So in this case Assumption~\ref{asm:coeff}.\ref{itm:amin} does not hold, 
our compactness argument fails and we have no theoretical prediction.

To approximate the eigenvalue problem in space we use piecewise linear finite elements
on a uniform mesh, with a meshwidth of $h = 1/64$. Numerical tests for different
meshwidths ($h = 1/8$ to $h = 1/128$) produced qualitatively the same results.

The purpose of this section is to provide supporting evidence
that for the problem above the gap remains bounded.  
We do this in a brute force manner by studying the minimum of the gap
over a large number of parameter realisations.
The parameter realisations are generated by a QMC point set; specifically, a base-2 embedded
lattice rule (see \cite{CKN06}) with up to $2^{20}$ points and a single random shift. 
To generate the points we use the generating vector \texttt{exod2\_base2\_m20\_CKN} from \cite{DirkMPS}.
We choose QMC points as the test set
because they can be shown to be well-distributed in high dimensions, see e.g. \cite{CKN06,DKS13}.
The goal is not to find the minimum, but to provide evidence that as more and more of the
parameter domain is searched (which corresponds to more realisations), the minimum of the gap 
over all realisations approaches a constant value.

The results are given in Figures~\ref{fig:uni1}--\ref{fig:LN2}, where in each figure we plot the
minimum of the gap against the number of realisations $N$ (blue circles) and an 
estimate (see the next paragraph) of the distance between the estimated minimum and the true minimum (black triangles), 
along with a least-squares fit to $\alpha N^{-\beta}$ (dashed red line) of this estimate. 
Each data point corresponds to a doubling of the number of QMC points:
$N = 1, 2, 4, \ldots, 2^{20}$, and the axes are in loglog scale.

Letting $\delta_N$ denote the approximate minimum over the first $N$ realisations,
we estimate the distance to the true minimum by 
\[
\delta_N - \min_{\bsy \in U} \big(\lambda_2(\bsy) - \lambda_1(\bsy) \big)
\,\approx\, \delta_N - \delta_{N^*}\,,
\]
where $\delta_{N^*}$ corresponds to the most accurate estimate of the minimum, 
with $N^* = 2^{20}$. The purpose of including such an
estimate of the distance to the true minimum is to demonstrate
that not only does the minimum of the gap appear to plateau, but
that the differences also decay like a power of $N$.

First we consider the affine coefficient in \eqref{eq:a_general}.
The three different choices of $c_0$ are: 
in Figure~\ref{fig:uni1}  $c_0 = 1$, 
which gives $\amin = 0.18$ and $\amax = 1.82$;
in Figure~\ref{fig:uni2} $ c_0 = 1.223$, 
which gives $\amin = 2\times10^{-4}$ and $\amax = 2$; and
in Figure~\ref{fig:uni3}   $c_0 = 0.5$, 
which gives $\amin = 0.59$ and $\amax = 1.41$.
Then, in Figure~\ref{fig:LN1}  we plot the log-normal 
coefficient \eqref{eq:lognormal} with $\amin = a_* = 0$ and $c_0 = 1$, and
in Figure~\ref{fig:LN2} we plot the lognormal coefficient with
$\amin = a_* = 0.18$ and $c_0 = 1$.

For each different choice of the affine coefficient \eqref{eq:a_general} 
(Figures~\ref{fig:uni1}, \ref{fig:uni2}, \ref{fig:uni3}),
the minimum value of the gap appears to plateau and approach a nonzero minimum. 
The minimum of the gap seems fairly insensitive to changes in $c_0$.
However, in Figures~\ref{fig:LN1} and  \ref{fig:LN2} for the log-normal coefficient 
(which, we recall is not covered by the theory of the current work)
the results are inconclusive.
It appears that the gap tends to 0 in the case of a true lognormal
coefficient ($a_* = 0$, Figure~\ref{fig:LN1}), and that it is bounded away from 0
for the ``regularised'' lognormal coefficient with $a_* = 0.18$ in Figure~\ref{fig:LN2}.
However, in Figure~\ref{fig:LN1} the smallest computed value of the gap is close to 1, 
and it is possible that it will plateau for a denser set of QMC points.
Also, in Figure~\ref{fig:LN2} the plateau is not as clearly developed in 
as in Figures~\ref{fig:uni1}--\ref{fig:uni3}.
It remains an open question if the gap can be bounded in the
case of the lognormal coefficient \eqref{eq:lognormal}, and whether
$a_*$ needs to be strictly positive.

\begin{figure}[!h]
\centering
\includegraphics[scale=.36]{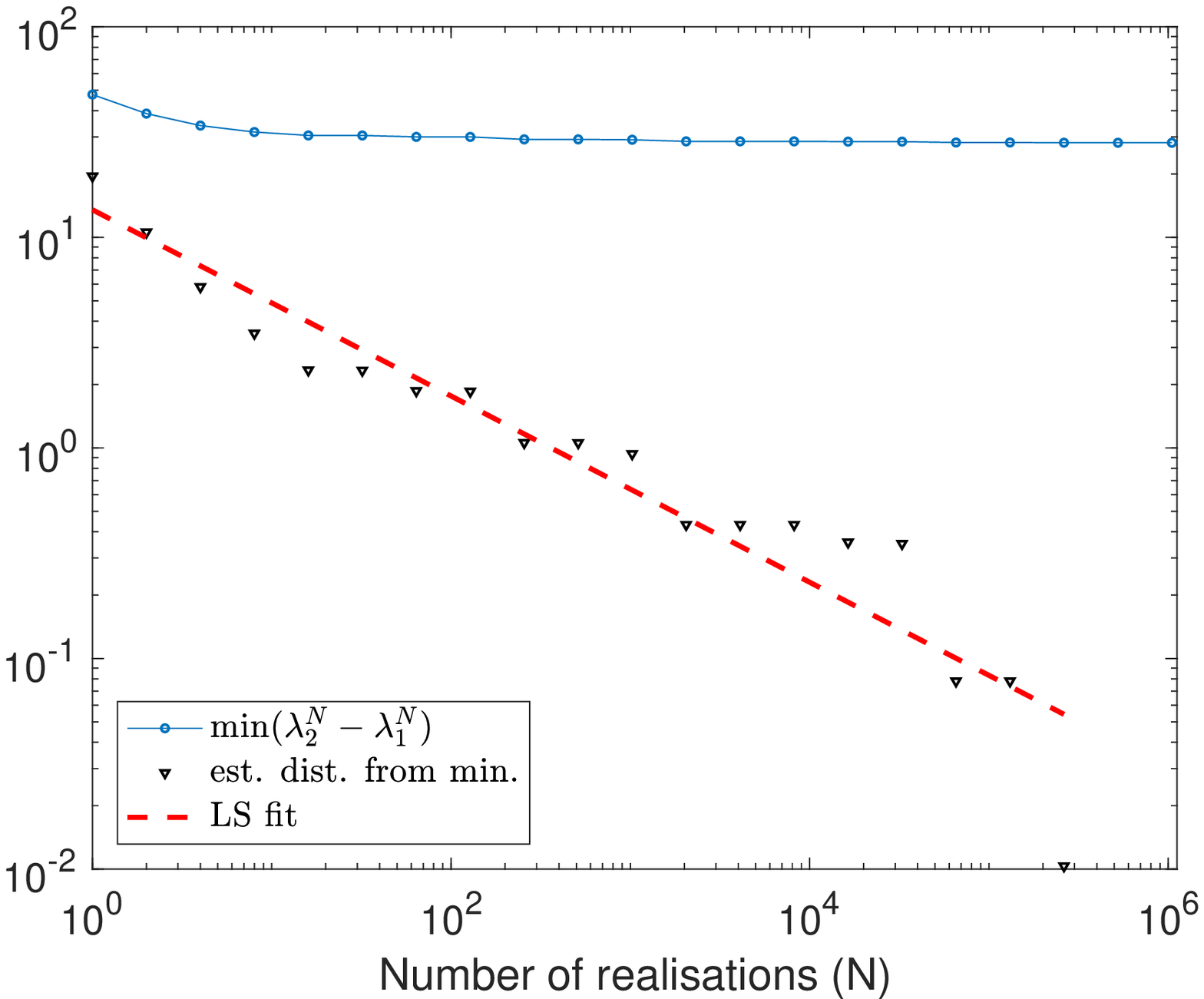}
\caption{Estimate of the minimum of the spectral gap and estimate of the distance to the true minimum: affine coefficient \eqref{eq:a_general} with $a_0 = c_0 = 1$, $\amin = 0.18$, $\amax = 1.82$.}
\label{fig:uni1}
\end{figure}

\begin{figure}[!h]
\centering
\includegraphics[scale=.36]{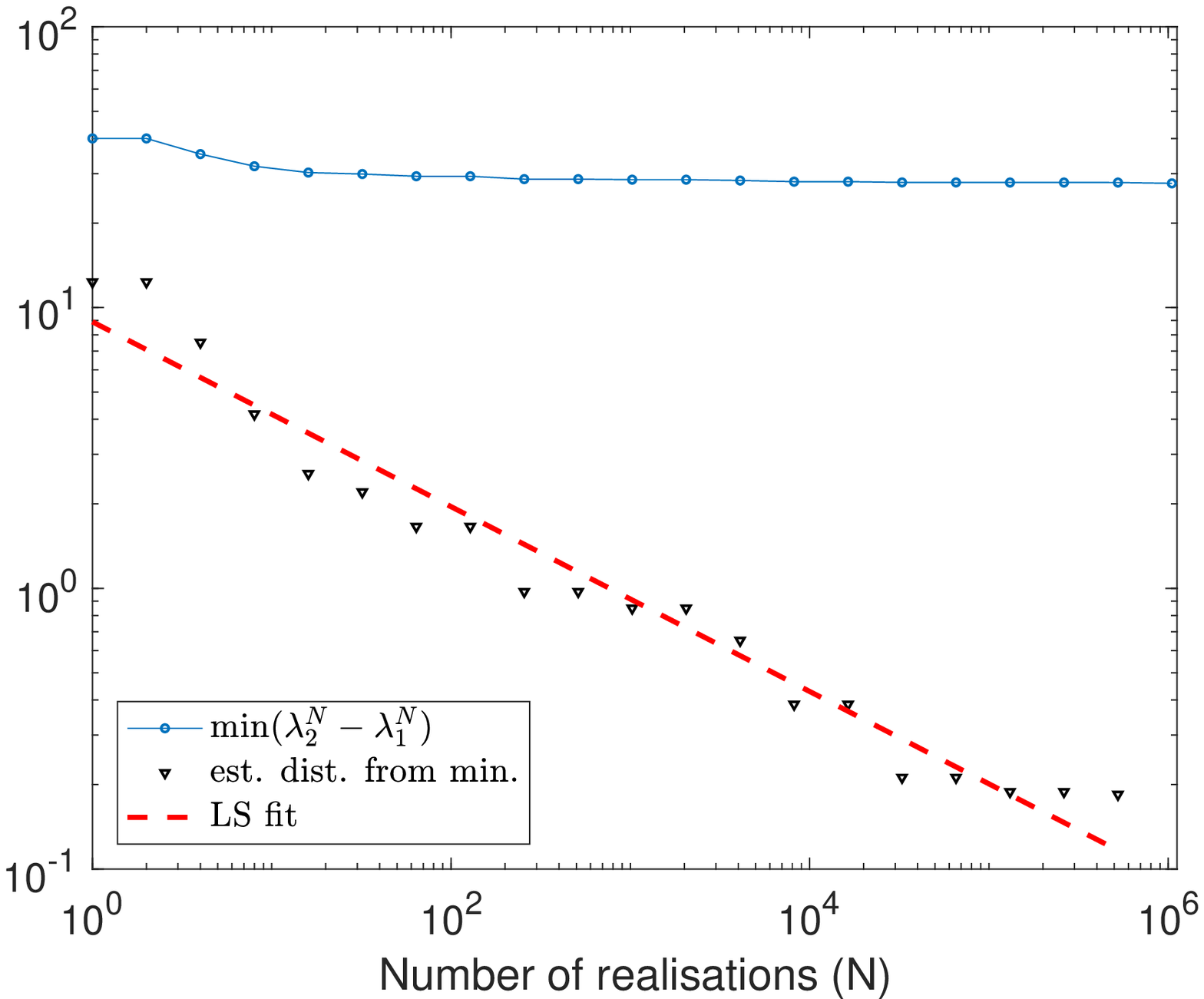}
\caption{Estimate of the minimum of the spectral gap and estimate of the distance to the true minimum: affine coefficient \eqref{eq:a_general} with $a_0 = 1$, $c_0 = 1.223$, $\amin = 2\times10^{-4}$, $\amax = 2$.}
\label{fig:uni2}
\end{figure}

\begin{figure}
\centering
\includegraphics[scale=.36]{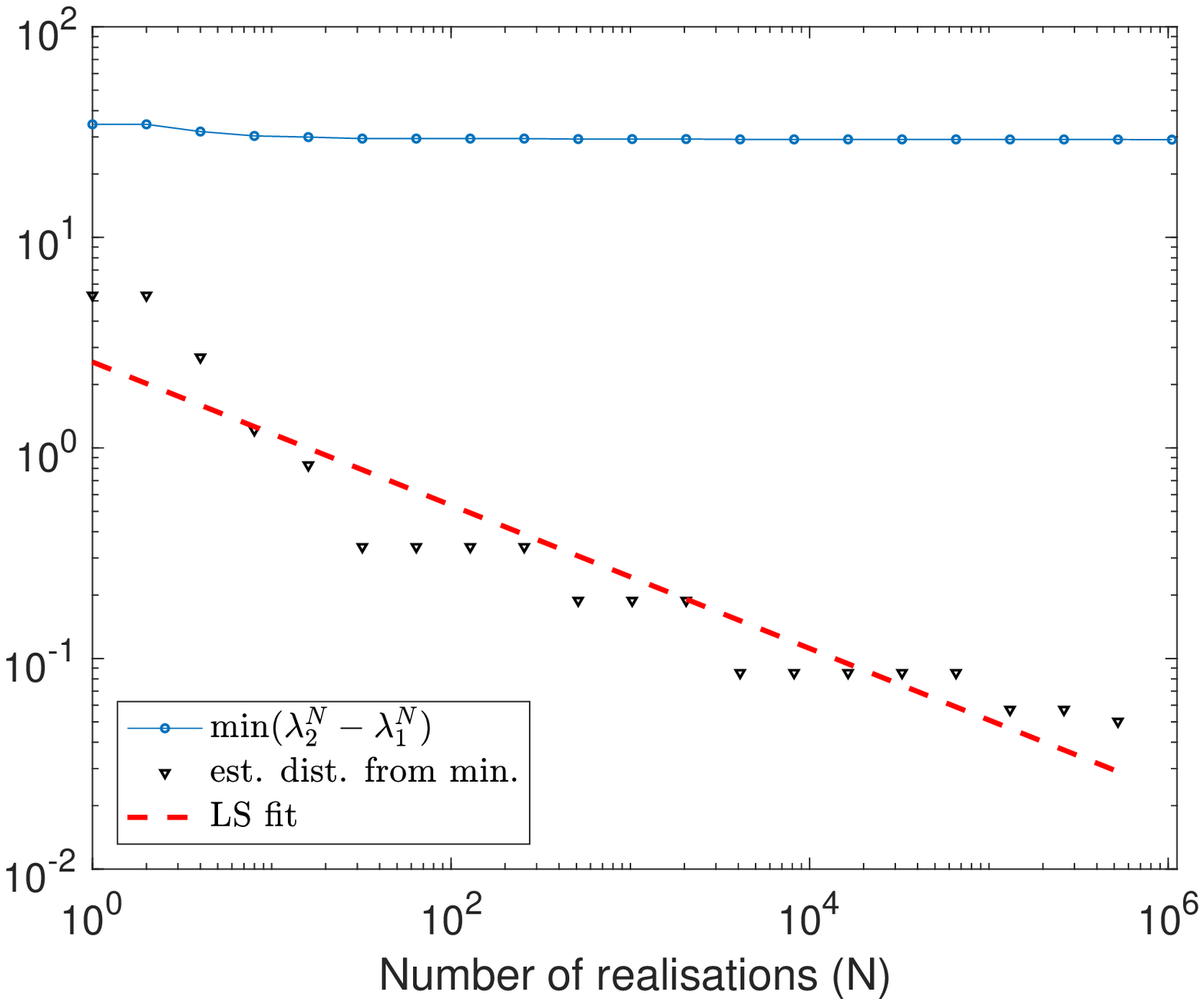}
\caption{Estimate of the minimum of the spectral gap and estimate of the distance to the true minimum: affine coefficient \eqref{eq:a_general} with $a_0 = 1$, $c_0 = 0.5$, $\amin = 0.59$, $\amax = 1.41$.}
\label{fig:uni3}
\end{figure}

\begin{figure}
\centering
\includegraphics[scale=.36,trim={1mm 0 1mm 0.5mm}]{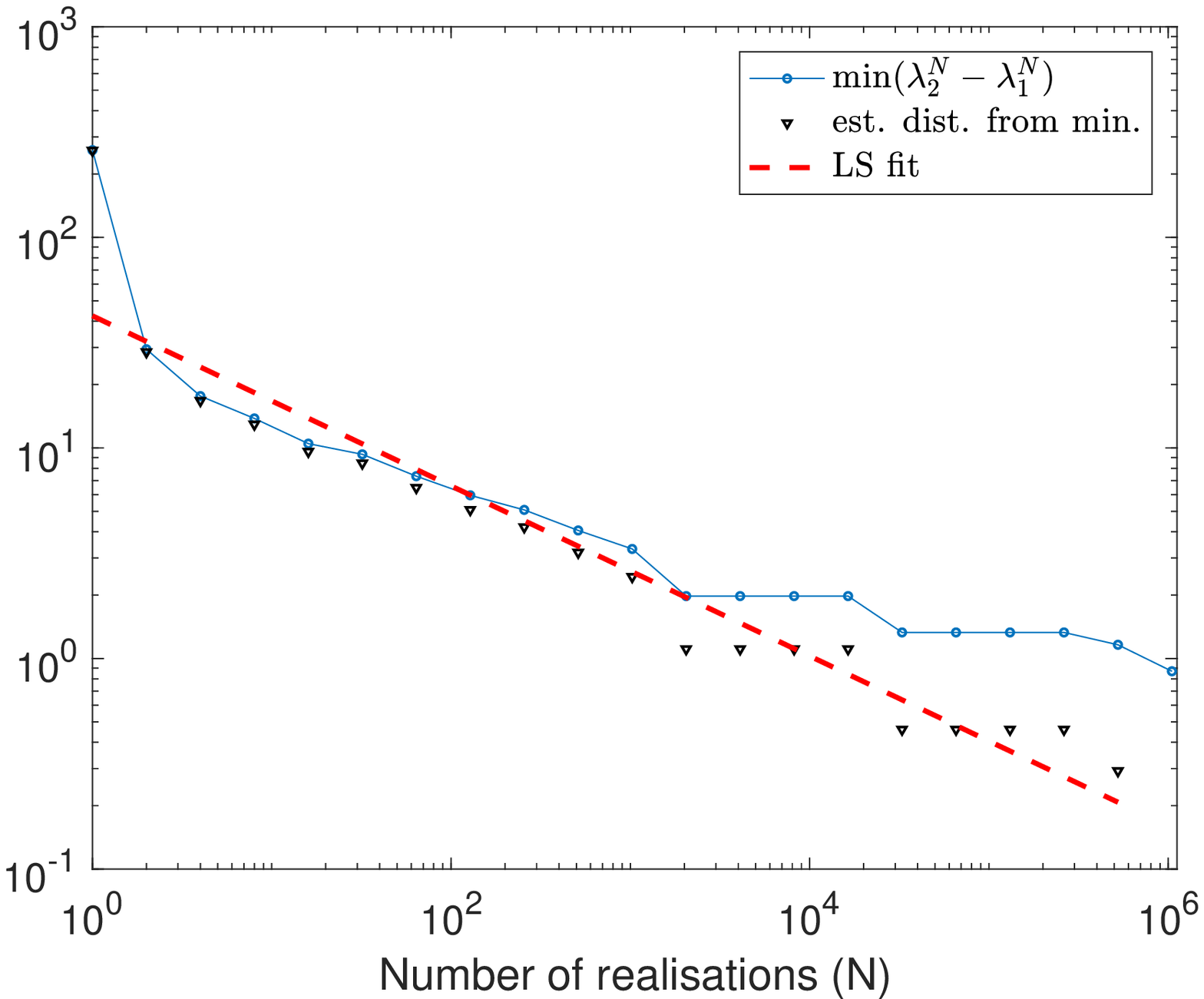}
\caption{Estimate of the minimum of the spectral gap and estimate of the distance to the true minimum: log-normal coefficient \eqref{eq:lognormal} with $\amin = a_* = 0$ and $c_0 = 1$.}
\label{fig:LN1}
\end{figure}

\begin{figure}
\centering
\includegraphics[scale=.36,trim={1mm 0 1mm 0.5mm}]{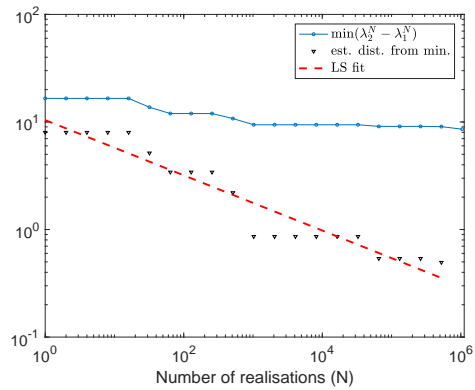}
\caption{Estimate of the minimum of the spectral gap and estimate of the distance to the true minimum: log-normal coefficient \eqref{eq:lognormal} with $\amin = a_* = 0.18$ and $c_0 = 1$.}
\label{fig:LN2}
\end{figure}

\pagebreak
\section{Conclusion}
The spectral gap is an important quantity that occurs throughout several areas
of the numerical analysis of eigenvalue problems, and in this work we proved that,
under certain conditions on the coefficient,
the spectral gap of a random elliptic eigenvalue problem is 
uniformly bounded from below.
In all of our numerical experiments the results strongly suggest that the 
minimum of the gap approaches a nonzero constant value.
The only exception is the lognormal coefficient, which is not covered by our theory.

\bibliographystyle{plain}

\end{document}